\documentclass[a4paper,12pt]{article}     
\usepackage{amsmath,amscd,amssymb}        
\usepackage{latexsym}                     
\usepackage[english, german]{babel}                

\usepackage{theorem}                 

\textheight 22cm
\textwidth 14cm
\oddsidemargin 7mm

\newtheorem{theorem}{Theorem}

\newtheorem{lemma}{Lemma}



\newenvironment{definition}
{\smallskip\noindent{\bf Definition\/}:}{\smallskip\par}

\newenvironment{remark}
{\smallskip\noindent{\bf Remark\/}.}{\smallskip\par}


%
\newenvironment{proof}{\begin{ProofwCaption}{Proof}}{\end{ProofwCaption}}
\newenvironment{proof*}[1]{\begin{ProofwCaption}{{#1}}}{\end{ProofwCaption}}
\newenvironment{ProofwCaption}[1]%
  {\addvspace\theorempreskipamount \noindent{\it #1.}\rm}%
  {\qed \par \addvspace\theorempostskipamount}
\newcommand{\qedsymbol}{{\rm $\Box$}}
\newcommand{\qed}{\hfill\qedsymbol}


\newcommand{\CC}{{\mathbb C}}

\newcommand{\RR}{{\mathbb R}}
\newcommand{\ZZ}{{\mathbb Z}}

\newcommand{\eps}{\varepsilon}

\title{Orbifold zeta functions for dual invertible polynomials}
\author{Wolfgang Ebeling and Sabir M.~Gusein-Zade
\thanks{Partially supported by DFG (Mercator fellowship, Eb 102/8-1), RFBR-13-01-00755
and NSh-5138.2014.1.
Keywords: invertible polynomial, group action, monodromy, orbifold zeta function.
AMS 2010 Math. Subject Classification: 14J33, 14R20, 57R18, 58K10.
}
}
\date{}

\begin{document}
\selectlanguage{english}

\maketitle

\begin{abstract}
An invertible polynomial in $n$ variables is a quasihomogeneous polynomial consisting of $n$ monomials
so that the weights of the variables and the quasi-degree are well defined. 
In the framework of the construction of mirror symmetric orbifold Landau--Ginzburg models,
P.~Berg\-lund, T.~H\"ubsch and M.~Henningson considered a pair $(f,G)$ consisting of
an invertible polynomial $f$ and an abelian group $G$ of its symmetries together with
a dual pair $(\widetilde{f}, \widetilde{G})$. Here we study the reduced orbifold
zeta functions of dual pairs $(f,G)$ and $(\widetilde{f}, \widetilde{G})$ and show
that they either coincide or are inverse to each other depending on the number $n$ of variables.
\end{abstract}

\section*{Introduction} 
P.~Berglund and T.~H\"ubsch \cite{BH1} proposed a method to construct some 
mirror symmetric pairs of manifolds. Their construction involves a polynomial $f$
of a special form, a so called  {\em invertible} one, and its
{\em Berglund--H\"ubsch transpose} $\widetilde f$. In \cite{BH1} these polynomials
appeared as potentials of Landau--Ginzburg models. 
This construction was
generalized in \cite{BH2} to orbifold Landau--Ginzburg models described by pairs
$(f, G)$, where $f$ is an  invertible polynomial and $G$ is a (finite)
abelian group of symmetries of $f$. For a pair $(f, G)$ one defines the dual pair
$(\widetilde{f}, \widetilde{G})$. In \cite{BH2, KY}, there were described some
symmetries between invariants of the pairs $(f, G)$ and $(\widetilde{f}, \widetilde{G})$
corresponding to the orbifolds defined by the equations $f=0$ and $\widetilde{f}=0$ in
weighted projective spaces.  Some duality (symmetry) properties of 
the singularities defined by $f$ and $\widetilde f$ were observed in \cite{MMJ, BLMS, ET2, Taka}.
In particular, in \cite{MMJ} it was shown that the reduced orbifold Euler characteristics
of the Milnor fibres of $f$ and $\widetilde f$ with
the actions of the groups $G$ and $\widetilde G$ respectively coincide
up to sign.

Here we consider the (reduced) orbifold zeta function defined in \cite{ET2}. One can say that it collects
an information about the eigenvalues of monodromy operators modified by so called age (or fermion) shifts.
We show that the (reduced) orbifold zeta functions of Berglund-H\"ubsch-Henningson dual pairs
$(f, G)$ and $(\widetilde{f}, \widetilde{G})$ either coincide or are inverse to each other depending on
the number $n$ of variables. This is a refinement of the above mentioned result of \cite{MMJ}
which means that the degrees of these zeta functions coincide up to sign.

\section{Invertible polynomials}\label{Invertible}
A quasihomogeneous polynomial $f$ in $n$ variables is called 
{\em invertible} (see \cite{Kreuzer}) if it contains $n$ monomials, i.e.\ it is of the form 
\begin{equation}\label{inv} 
f(x_1, \ldots, x_n)=\sum\limits_{i=1}^n a_i \prod\limits_{j=1}^n x_j^{E_{ij}} 
\end{equation} 
for $a_i\in\CC^\ast=\CC\setminus\{0\}$, and the matrix 
$E=(E_{ij})$ (with non-negative integer entries) is non-degenerate: $\det E\ne 0$. 
Without loss of generality we may assume that $a_i=1$ 
for $i=1, \ldots, n$ and that $\det E>0$. 

The {\em Berglund-H\"ubsch transpose} $\widetilde{f}$ of the invertible polynomial (\ref{inv})
is
$$ 
\widetilde{f}(x_1, \ldots, x_n)=\sum\limits_{i=1}^n a_i \prod\limits_{j=1}^n x_j^{E_{ji}}\,, 
$$ 
i.e.\ it is defined by the transpose $E^T$ of the matrix $E$.

The (diagonal) {\em symmetry group} of the invertible polynomial $f$ is the group $G_f$
of diagonal linear transformations of $\CC^n$ preserving $f$:
$$ 
G_f=\{(\lambda_1, \ldots, \lambda_n)\in (\CC^*)^n: 
f(\lambda_1 x_1, \ldots, \lambda_n x_n)= f(x_1, \ldots, x_n)\}\,.
$$  
This group is finite and its order $\vert G_f\vert$ is equal to $d=\det E$ \cite{Kreuzer,ET2}.
The polynomial $f$ is quasihomogeneous with respect to the rational weights $q_1$, \dots, $q_n$
defined by the equation
$$
E(q_1, \ldots, q_n)^T=(1, \ldots, 1)^T\,,
$$
i.e.\
$$
f(\exp(2\pi i q_1\tau) x_1, \ldots, \exp(2\pi i q_n\tau) x_n)= \exp(2\pi i\tau) f(x_1, \ldots, x_n)\,.
$$

The {\em Milnor fibre} of the polynomial $f$ is the manifold
$$
V_f=\{(x_1, \ldots, x_n)\in\CC^n: f(x_1, \ldots, x_n)=1\}\,.
$$
The {\em monodromy transformation} (see below) is induced by the element
$$
g_0=(\exp(2\pi i q_1), \ldots, \exp(2\pi i q_n))\in G_f\,.
$$
(In \cite{Krawitz} the element $g_0$ is called the ``exponential grading operator''.)

For a finite abelian group $G$, let $G^*={\rm Hom\,}(G,\CC^*)$ be its group of characters. 
(The groups $G$ and $G^*$ are isomorphic, but not in a canonical way.) One can show
that the symmetry group $G_{\widetilde{f}}$ of the
Berglund-H\"ubsch transpose $\widetilde{f}$ of an invertible polynomial $f$ is
canonically isomorphic to $G_f^*$ (see, e.g., \cite{BLMS}). The duality between $G_f$ and $G_{\widetilde{f}}$
is defined by the pairing
$$
\langle\underline{\lambda}, \underline{\mu}\rangle_E=\exp(2\pi i(\underline{\alpha}, \underline{\beta})_E)\,,
$$
where $\underline{\lambda} = (\exp(2\pi i\,\alpha_1), \ldots, \exp(2\pi i\,\alpha_n))\in G_{\widetilde{f}}$, 
$\underline{\mu} = (\exp(2\pi i\,\beta_1), \ldots, \exp(2\pi i\,\beta_n))\in G_f$,
$\underline{\alpha}=(\alpha_1, \ldots, \alpha_n)$,
$\underline{\beta}=(\beta_1, \ldots, \beta_n)$,
$$
(\underline{\alpha}, \underline{\beta})_E:=(\alpha_1, \ldots, \alpha_n)E(\beta_1, \ldots, \beta_n)^T
$$
(see \cite{BLMS}).

\begin{definition}
 (\cite{BH2}) For a subgroup $H\subset G_f$ its {\em dual} $\widetilde H\subset G_{\widetilde{f}}=G_f^*$
 is the kernel of the natural map $i^*:G_f^*\to H^*$ induced by the inclusion $i:H\hookrightarrow G_f$.
\end{definition}

One can see that $\vert H\vert\cdot\vert\widetilde H\vert=\vert G_f\vert=\vert G_{\widetilde{f}}\vert$.

\begin{lemma}\label{SL}
Let $G_{f,0}=\langle g_0\rangle$ be the subgroup of $G_f$ generated by the monodromy transformation. One has
$\widetilde{G_{f,0}}= G_{\widetilde{f}} \cap {\rm SL\,}(n,\CC)$.
\end{lemma}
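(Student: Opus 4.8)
The plan is to unwind both sides of the claimed equality to a single numerical condition on the exponents and to check that the two conditions coincide.

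First I would recall that, by the definition of the dual subgroup, $\widetilde{G_{f,0}}\subset G_{\widetilde{f}} = G_f^*$ is the kernel of the restriction map $i^*:G_f^*\to G_{f,0}^*$. Since $G_{f,0}=\langle g_0\rangle$ is cyclic, a character is trivial on $G_{f,0}$ precisely when it is trivial on the generator $g_0$. Under the canonical identification of $G_{\widetilde{f}}$ with $G_f^*$ furnished by the pairing $\langle\,\cdot\,,\,\cdot\,\rangle_E$, an element $\underline{\lambda} = (\exp(2\pi i\,\alpha_1), \ldots, \exp(2\pi i\,\alpha_n))\in G_{\widetilde{f}}$ corresponds to the character $\underline{\mu}\mapsto\langle\underline{\lambda}, \underline{\mu}\rangle_E$. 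Hence
$$
\widetilde{G_{f,0}} = \{\underline{\lambda}\in G_{\widetilde{f}} : \langle\underline{\lambda}, g_0\rangle_E = 1\}.
$$

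Next I would compute this pairing explicitly. Writing $g_0 = (\exp(2\pi i\,q_1), \ldots, \exp(2\pi i\,q_n))$, so that $g_0$ is represented by the weight vector $(q_1, \ldots, q_n)$, the definition of the pairing gives $\langle\underline{\lambda}, g_0\rangle_E = \exp(2\pi i(\underline{\alpha}, \underline{q})_E)$ with $(\underline{\alpha}, \underline{q})_E = (\alpha_1, \ldots, \alpha_n)E(q_1, \ldots, q_n)^T$. The crucial simplification is the defining relation $E(q_1, \ldots, q_n)^T = (1, \ldots, 1)^T$ for the weights, which collapses this expression to $(\underline{\alpha}, \underline{q})_E = \alpha_1 + \cdots + \alpha_n$. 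Therefore $\langle\underline{\lambda}, g_0\rangle_E = \exp(2\pi i(\alpha_1 + \cdots + \alpha_n))$, and this equals $1$ if and only if $\alpha_1 + \cdots + \alpha_n\in\ZZ$.

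Finally I would observe that the right-hand side of the claim translates into exactly the same condition: the diagonal matrix ${\rm diag}(\exp(2\pi i\,\alpha_1), \ldots, \exp(2\pi i\,\alpha_n))$ lies in ${\rm SL}(n, \CC)$ if and only if its determinant $\prod_{j=1}^n\exp(2\pi i\,\alpha_j) = \exp(2\pi i(\alpha_1 + \cdots + \alpha_n))$ equals $1$, i.e.\ again if and only if $\alpha_1 + \cdots + \alpha_n\in\ZZ$. Since membership in $\widetilde{G_{f,0}}$ and membership in $G_{\widetilde{f}}\cap{\rm SL}(n, \CC)$ are governed by the identical numerical condition, the two subgroups coincide. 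I do not anticipate a genuine obstacle here; the only points demanding care are the bookkeeping of the canonical isomorphism $G_{\widetilde{f}}\cong G_f^*$ and the reduction of triviality on the cyclic group $G_{f,0}$ to triviality on the single element $g_0$, after which the defining equation for the weights does all the work.
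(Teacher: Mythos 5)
Your proposal is correct and follows essentially the same route as the paper's proof: both reduce membership in $\widetilde{G_{f,0}}$ to the single condition $\langle\underline{\lambda}, g_0\rangle_E=1$, use the weight equation $E(q_1,\ldots,q_n)^T=(1,\ldots,1)^T$ to collapse the pairing to $\alpha_1+\cdots+\alpha_n\in\ZZ$, and identify this with $\det\underline{\lambda}=1$. The only difference is that you spell out the bookkeeping (kernel of $i^*$, reduction of triviality on the cyclic group to triviality on $g_0$) that the paper leaves implicit.
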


\begin{proof}
 For $\underline{\lambda} = (\exp(2\pi i\,\alpha_1), \ldots, \exp(2\pi i\,\alpha_n))\in G_{\widetilde{f}}$,
 $\langle\underline{\lambda}, g_0\rangle_E=1$ if and only if
 $$
 (\alpha_1, \ldots, \alpha_n)E(q_1, \ldots, q_n)^T\in \ZZ\,.
 $$
One has $E(q_1, \ldots, q_n)^T=(1, \ldots, 1)^T$.
Therefore $(\alpha_1, \ldots, \alpha_n)(1, \ldots, 1)^T\in \ZZ$, i.e.\
$\sum\limits_i \alpha_i\in\ZZ$, $\prod\limits_i \lambda_i=1$. This means that
$\underline{\lambda}\in {\rm SL\,}(n,\CC)$.
\end{proof}

\section{Orbifold zeta function}\label{sect-orbifold-zeta}
The {\em zeta function} $\zeta_h(t)$ of a (proper, continuous) transformation
$h:X\to X$ of a topological space $X$ is the rational function defined by
\begin{equation} \label{Defzeta}
\zeta_h(t)=\prod\limits_{q\ge 0} 
\left(\det({\rm id}-t\cdot h^{*}{\rm \raisebox{-0.5ex}{$\vert$}}{}_{H^q_c(X;\RR)})\right)^{(-1)^q}\,,
\end{equation}
where $H^q_c(X;\RR)$ denotes the cohomology with compact support.
The degree of the zeta function $\zeta_h(t)$, i.e.\ the degree of the numerator minus the degree
of the denominator, is equal to the Euler characteristic $\chi(X)$ of the space $X$
(defined via cohomology with compact support).

\begin{remark}\label{free}
 If a transformation $h:X\to X$ defines on $X$ a free action of the cyclic group of order $m$,
 (i.e.\ if $h^m(x)=x$ for $x\in X$, $h^k(x)\ne x$ for $0<k<m$, $x\in X$), then 
 $\zeta_h(t)=(1-t^m)^{\chi(X)/m}$.
\end{remark}

The {\em monodromy zeta function},
i.e.\ the zeta function of a monodromy transformation is of the form
$\prod\limits_{m\ge1}(1-t^m)^{s_m}$, where $s_m$ are integers such that only finitely many of them
are different from zero (see, e.g., \cite{AGV}). In particular, all roots and/or poles of the monodromy
zeta function are roots of unity.

The {\em orbifold} (monodromy) {\em zeta function} was essentially defined in \cite{ET2}.

Let $G$ be a finite group acting on the space $\CC^n$ by a representation and let
$f:(\CC^n,0)\to(\CC,0)$ be a $G$-invariant germ of a holomorphic function. The Milnor fibre $V_f$
of the germ $f$ is the manifold $\{f=\eps\}\cap B_\delta^{2n}$, where $B_\delta^{2n}$ is the ball
of radius $\delta$ centred at the origin in $\CC^n$, $0<\vert\eps\vert\ll\delta$, $\delta$ is
small enough. One may assume the monodromy transformation $h_f$ of the germ $f$ \cite{AGV}
to be $G$-invariant. For an element $g\in G$, its {\em age} \cite{Ito-Reid}
(or fermion shift number: \cite{Zaslow}) is defined by ${\rm age\,}(g):=\sum\limits_{i=1}^n\alpha_i$,
where in a certain basis in $\CC^n$ one has
$g={\rm diag\,}(\exp(2\pi i\,\alpha_1), \ldots, \exp(2\pi i\,\alpha_n))$
with $0\le\alpha_i < 1$.

\begin{remark} The map $\exp(2\pi i\,{\rm age\,}(\cdot)) : G \to \CC^\ast$ is a group homomorphism.
If $f$ is an invertible polynomial and $G$ is the group $G_f$ of its symmetries, then it is
an element of $G_f^\ast=G_{\widetilde{f}}$. 
\end{remark}

For a rational function $\varphi(t)$ of the form $\prod\limits_i(1-\alpha_i t)^{r_i}$
with only finitely many of the exponents $r_i\in\ZZ$ different from zero, its {\em  $g$-age shift}
is defined by
$$
\left(\varphi(t)\right)_g=\prod\limits_i(1-\alpha_i \exp(-2\pi i\,{\rm age\,}(g)) t)^{r_i}\,,
$$
i.e.\ all its roots and/or poles are multiplied by $\exp(2\pi i\,{\rm age\,}(g))\in\CC^*$.

Let ${\rm Conj\,}G$ be the set of conjugacy classes of elements of $G$. For a class $[g]\in{\rm Conj\,}G$,
let $g\in G$ be a representative of it. Let $C_G(g)$ be the centralizer of the element $g$ in $G$.
Let $(\CC^n)^g$ be the fixed point set of the element $g$,
let $V_f^g=V_f\cap (\CC^n)^g$ be the corresponding part of the Milnor fibre, and let
$\widehat{V}_f^g= V_f^g/C_G(g)$ be the corresponding quotient space (the ``twisted sector'' in terms of
\cite{Chen_Ruan}). One may assume that the monodromy transformation preserves $V_f^g$ for each $g$.
Let $\widehat{h}_f^g:\widehat{V}_f^g\to \widehat{V}_f^g$ be the corresponding map (monodromy)
on the quotient space.
Its zeta function $\zeta_{\widehat{h}_f^g}(t)$ depends only on the conjugacy class of $g$.

\begin{definition}
 The {\em orbifold zeta function} of the pair $(f,G)$ is defined by
 \begin{equation}\label{orbifold-zeta}
  \zeta^{{\rm orb}}_{f,G}(t)= \prod\limits_{[g]\in {\rm Conj\,}G}\left(\zeta_{\widehat{h}_f^g}(t)\right)_g\,.
 \end{equation}
\end{definition}

One can see that the degree of $\zeta^{{\rm orb}}_{f,G}(t)$ is equal to the orbifold Euler
characteristic of $(V_f, G)$ (see, e.g., \cite{HH}, \cite{MMJ}).

For an abelian G, $\widehat{V}_f^g=V_f^g/G$ and the product in (\ref{orbifold-zeta}) runs
over all elements $g\in G$.

\begin{definition}
  The {\em reduced orbifold zeta function} $\overline{\zeta}^{{\rm orb}}_{f,G}(t)$ is defined by
  $$
  \overline{\zeta}^{{\rm orb}}_{f,G}(t)=
  \zeta^{{\rm orb}}_{f,G}(t)\left/\prod\limits_{[g]\in {\rm Conj\,}G}(1-t)_g \right.
  $$
  (cf. (\ref{orbifold-zeta})).
\end{definition}

Now let $G$ be abelian. One can assume that the action of $G$ on $\CC^n$ is diagonal and therefore
it respects the decomposition of $\CC^n$ into the coordinate tori.
For a subset $I\subset I_0=\{1, 2, \ldots, n\}$, let 
$$
(\CC^*)^I:= \{(x_1, \ldots, x_n)\in \CC^n: x_i\ne 0 {\rm \ for\ }i\in I, x_i=0 {\rm \ for\ }i\notin I\}
$$
be the corresponding coordinate torus. Let $V_f^I=V_f\cap (\CC^*)^I$.
One has $V_f=\coprod\limits_{I\subset I_0}V_f^I$.
Let $G^I\subset G$ be the isotropy subgroup of the action of $G$ on the torus $(\CC^*)^I$.
(All points of the torus $(\CC^*)^I$ have one and the same isotropy subgroup.)
The monodromy transformation $h_f$ is assumed to respect the decomposition of the Milnor fibre
$V_f$ into the parts $V_f^I$. Let $h_f^I$ and $\widehat{h}_f^I$ be the corresponding (monodromy)
transformations of $V_f^I$ and $V_f^I/G$ respectively. One can define in the same way as above
the orbifold zeta function corresponding to the part $V_f^I$ of the Milnor fibre:
\begin{equation}\label{zeta-part1}
 \zeta^{{\rm orb},I}_{f,G}(t)= \prod\limits_{g\in G}\left(\zeta_{\widehat{h}_f^{I,g}}(t)\right)_g\,.
 \end{equation}
One has
$$
 \zeta^{{\rm orb}}_{f,G}(t)= \prod\limits_{I\subset I_0}\zeta^{{\rm orb},I}_{f,G}(t)\,.
$$
Since the isotropy subgroups of all points of $(\CC^*)^I$ are the same (equal to $G^I$),
the equation (\ref{zeta-part1}) reduces to
\begin{equation}\label{zeta-part2}
 \zeta^{{\rm orb},I}_{f,G}(t)= \prod\limits_{g\in G^I}\left(\zeta_{\widehat{h}_f^I}(t)\right)_g\,.
 \end{equation}
The (monodromy) zeta function $\zeta_{\widehat{h}_f^I}(t)$ has the form
$\prod\limits_{m\ge 0}(1-t^m)^{s_m}$ with only a finite number of the exponents $s_m$ different from zero.
Let us compute $\prod\limits_{g\in G^I}\left(1-t^m\right)_g$.
\begin{lemma}\label{1-tm}
 One has
$$
\prod\limits_{g\in G^I}\left(1-t^m\right)_g=
\left(1-t^{{\rm lcm\,}(m,k)}\right)^{\frac{m\vert G^I\vert}{{\rm lcm\,}(m,k)}}\,,
$$
where $k=\vert G^I/ G^I \cap {\rm SL}(n,\CC)\vert$, ${\rm lcm\,}(\cdot,\cdot)$ denotes the least common multiple.
\end{lemma}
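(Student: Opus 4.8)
The plan is to reduce the statement to the structure of the homomorphism $\theta:=\exp(2\pi i\,{\rm age\,}(\cdot)):G\to\CC^*$ (which is a group homomorphism by the remark above) restricted to the isotropy subgroup $G^I$, followed by a multiplicity count among roots of unity. First I would note that for a diagonal element $g={\rm diag\,}(\exp(2\pi i\,\alpha_1), \ldots, \exp(2\pi i\,\alpha_n))$ one has $\det g=\exp(2\pi i\sum_i\alpha_i)=\theta(g)$, so $g\in{\rm SL\,}(n,\CC)$ precisely when $\theta(g)=1$. Hence the kernel of $\theta|_{G^I}$ is exactly $G^I\cap{\rm SL\,}(n,\CC)$, and the image $\theta(G^I)$ is a finite subgroup of $\CC^*$ of order $k=\vert G^I/(G^I\cap{\rm SL\,}(n,\CC))\vert$, i.e.\ it is the group $\mu_k$ of $k$-th roots of unity. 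Moreover each value in $\mu_k$ is attained by exactly $\vert G^I\vert/k$ elements of $G^I$.

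Next I would unwind the age shift applied to the factor $1-t^m$. Writing $1-t^m=\prod_{\omega\in\mu_m}(1-\omega t)$, the definition of the $g$-age shift gives $(1-t^m)_g=\prod_{\omega\in\mu_m}(1-\theta(g)^{-1}\omega\, t)$; in other words the ``inverse roots'' (the numbers $\alpha$ appearing in the factors $1-\alpha t$) of $(1-t^m)_g$ form the coset $\theta(g)^{-1}\mu_m$. Taking the product over $g\in G^I$ and using that $\theta(g)^{-1}$ runs through $\mu_k$, each value being hit $\vert G^I\vert/k$ times, the full multiset of inverse roots of $\prod_{g\in G^I}(1-t^m)_g$ becomes the set of products $\mu_k\cdot\mu_m$, where a pair $(\zeta,\omega)\in\mu_k\times\mu_m$ contributes the value $\zeta\omega$ with multiplicity $\vert G^I\vert/k$.

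The heart of the argument is the counting step. I would invoke that multiplication $\mu_k\times\mu_m\to\CC^*$, $(\zeta,\omega)\mapsto\zeta\omega$, is a group homomorphism with image $\mu_{{\rm lcm\,}(m,k)}$ and kernel of order $km/{\rm lcm\,}(m,k)=\gcd(m,k)$. As all fibres of a homomorphism have equal cardinality, every $\xi\in\mu_{{\rm lcm\,}(m,k)}$ is represented by exactly $\gcd(m,k)$ such pairs, hence occurs as an inverse root with total multiplicity $\gcd(m,k)\cdot\vert G^I\vert/k$. Consequently
$$
\prod_{g\in G^I}(1-t^m)_g=\Bigl(\prod_{\xi\in\mu_{{\rm lcm\,}(m,k)}}(1-\xi t)\Bigr)^{\gcd(m,k)\vert G^I\vert/k}=\bigl(1-t^{{\rm lcm\,}(m,k)}\bigr)^{\gcd(m,k)\vert G^I\vert/k}\,,
$$
and the identity $\gcd(m,k)/k=m/{\rm lcm\,}(m,k)$ rewrites the exponent as $\frac{m\vert G^I\vert}{{\rm lcm\,}(m,k)}$, as claimed.

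The hard part will be the bookkeeping of multiplicities: one has to combine the $\vert G^I\vert/k$-fold repetition coming from the fibres of $\theta|_{G^I}$ with the $\gcd(m,k)$-fold overlap coming from the fibres of the multiplication map, and verify that their product collapses to the stated exponent. Everything else is routine manipulation of roots of unity once $k$ has been identified with the order of $\theta(G^I)$.
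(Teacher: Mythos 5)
Your proposal is correct and follows essentially the same route as the paper's proof: both identify $\exp(2\pi i\,{\rm age}(\cdot))$ restricted to $G^I$ as a homomorphism with kernel $G^I\cap{\rm SL}(n,\CC)$ and image the $k$-th roots of unity, and then count roots of unity in the product. The only (immaterial) difference is in the bookkeeping: you compute the multiplicity of each ${\rm lcm}(m,k)$-th root of unity directly via the fibres of the multiplication map $\mu_k\times\mu_m\to\mu_{{\rm lcm}(m,k)}$, whereas the paper notes that all roots occur with equal multiplicity and fixes the exponent by dividing the total number of roots, $m\vert G^I\vert$, by ${\rm lcm}(m,k)$.
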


\begin{proof}
 The roots of the binomial $(1-t^m)$ are all the $m$th roots of unity. The map
 $\exp(2\pi i\,{\rm age}(\cdot)): G^I\to \CC^*$ is a group homomorphism. Its kernel coincides
 with $G^I\cap {\rm SL}(n,\CC)$. Therefore its image consists of all the $k$th roots of unity
 (each one corresponds to $\vert G^I\cap {\rm SL}(n,\CC)\vert$ elements of $G^I$). Thus the roots
 of $\prod\limits_{g\in G^I}\left(1-t^m\right)_g$ are all the roots of unity of degree
 ${\rm lcm\,}(m,k)$ with equal multiplicities. This means that
 $\prod\limits_{g\in G^I}\left(1-t^m\right)_g=(1-t^{{\rm lcm\,}(m,k)})^s$. The exponent $s$
 is determined by the number of roots.
\end{proof}

\section{Orbifold zeta functions for invertible polynomials}\label{main}
Let $(f,G)$ be a pair consisting of an invertible polynomial $f$ in $n$ variables
and a group $G\subset G_f$ of its (diagonal) symmetries and let $(\widetilde{f},\widetilde{G})$
be the Berglund--H\"ubsch--Henningson dual pair ($\widetilde{G}\subset G_{\widetilde{f}}$).
(We do not assume that the invertible polynomials are non-degenerate, i.e.\ that they have
isolated critical  points at the origin.)

\begin{theorem}
 One has
 \begin{equation}\label{main-eq}
  \overline{\zeta}^{{\rm orb}}_{\widetilde{f},\widetilde{G}}(t)=\left(
  \overline{\zeta}^{{\rm orb}}_{f,G}(t)  
  \right)^{(-1)^n}\,.
 \end{equation}
 \end{theorem}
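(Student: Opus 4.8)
The plan is to exploit the product decomposition $\zeta^{{\rm orb}}_{f,G}(t)=\prod_{I\subset I_0}\zeta^{{\rm orb},I}_{f,G}(t)$ and to match the factors of $(f,G)$ and $(\widetilde f,\widetilde G)$ under the involution $I\mapsto\overline{I}:=I_0\setminus I$ on subsets of $I_0$. First I would show that each torus factor collapses to a single binomial. On $(\CC^*)^I$ the monodromy $h_f^I$ is the restriction of $g_0$, and a diagonal element $g_0^k$ fixes a point of $(\CC^*)^I$ only if it already acts trivially on the whole torus; hence $\langle g_0\rangle$ induces a free cyclic action on $V_f^I$. Since $G/G^I$ acts freely on $(\CC^*)^I$ (an element fixes a point iff it lies in $G^I$), the same argument on the free quotient $\widehat V_f^I=V_f^I/G$ shows that $\widehat h_f^I$ generates a free cyclic action of some order $\widehat m_I$. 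By Remark~\ref{free}, $\zeta_{\widehat h_f^I}(t)=(1-t^{\widehat m_I})^{\chi(\widehat V_f^I)/\widehat m_I}$ with $\chi(\widehat V_f^I)=|G^I|\,\chi(V_f^I)/|G|$.

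Feeding this into (\ref{zeta-part2}) and applying Lemma~\ref{1-tm} with $m=\widehat m_I$, each torus contributes exactly one factor
$$
\zeta^{{\rm orb},I}_{f,G}(t)=\left(1-t^{L_I}\right)^{e_I},\qquad
L_I={\rm lcm}(\widehat m_I,k_I),\qquad
e_I=\frac{|G^I|^2\,\chi(V_f^I)}{|G|\,L_I}\,,
$$
where $k_I=|G^I/(G^I\cap {\rm SL\,}(n,\CC))|$. Thus both reduced orbifold zeta functions are finite products of binomials indexed by subsets $I$ (the empty set and the sets with $V_f^I=\emptyset$ contributing trivially), and the theorem reduces to the two numerical identities $L_I=\widetilde L_{\overline I}$ and $e_I=(-1)^n\,\widetilde e_{\overline I}$, where the tildes denote the corresponding quantities for $(\widetilde f,\widetilde G)$ on $(\CC^*)^{\overline I}$.

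The equality of the exponents of $t$, $L_I=\widetilde L_{\overline I}$, is where Lemma~\ref{SL} enters. The factor $k_I$ measures the deviation of $G^I$ from ${\rm SL\,}(n,\CC)$, while $\widehat m_I$ measures the order of the monodromy $g_0$ modulo $G$ on the $I$-coordinates; under the Berglund--H\"ubsch pairing $\langle\cdot,\cdot\rangle_E$ the roles of ``lying in ${\rm SL\,}(n,\CC)$'' and ``being a power of $g_0$'' are interchanged, which is precisely the content of Lemma~\ref{SL}. Dualising the isotropy subgroups $G^I=G\cap\{g_i=1,\ i\in I\}$ and $\widetilde G^{\overline I}=\widetilde G\cap\{\mu_j=1,\ j\in\overline I\}$ against each other via the pairing restricted to the complementary tori should convert $k_I$ into $\widehat{\widetilde m}_{\overline I}$ and $\widehat m_I$ into $\widetilde k_{\overline I}$, so that the least common multiples agree. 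For the exponents, the relation $|H|\cdot|\widetilde H|=|G_f|$ applied to the relevant isotropy groups controls the group-order factors $|G^I|^2/|G|$, and the remaining sign is supplied by the Euler characteristics: $V_f^I$ and $V_{\widetilde f}^{\overline I}$ have complex dimensions $|I|-1$ and $n-|I|-1$, so a signed matching of $\chi(V_f^I)$ and $\chi(V_{\widetilde f}^{\overline I})$ carries exactly the factor $(-1)^{(|I|-1)+(n-|I|-1)}=(-1)^n$.

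Finally I would check that the normalisation by $\prod_{[g]}(1-t)_g$ in the passage to the reduced zeta functions is itself self-dual under $I\mapsto\overline I$ (again through Lemma~\ref{SL} and the age homomorphism), so that dividing it out on both sides preserves the matching; multiplying the per-torus identities over all $I$ then yields (\ref{main-eq}). The main obstacle is the third paragraph: establishing the precise duality between the isotropy subgroups $G^I$ and $\widetilde G^{\overline I}$ together with their ${\rm SL}$-indices and monodromy orders, i.e.\ turning the $I\times\overline I$ block structure of $E$ and Lemma~\ref{SL} into the two clean identities $L_I=\widetilde L_{\overline I}$ and $e_I=(-1)^n\widetilde e_{\overline I}$. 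The signed Euler-characteristic comparison $\chi(V_f^I)\leftrightarrow\chi(V_{\widetilde f}^{\overline I})$ is essentially the ingredient form of the orbifold Euler-characteristic duality of \cite{MMJ}, and I expect it either to be reduced to that result or to be re-derived from the Sebastiani--Thom/atomic (Fermat--chain--loop) decomposition of invertible polynomials.
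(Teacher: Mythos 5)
Your setup reproduces the paper's own route: the decomposition over coordinate tori, the observation that the monodromy generates a free cyclic action on $V_f^I$ and on $V_f^I/G$, the application of Lemma~\ref{1-tm} to collapse each stratum to a single binomial $(1-t^{L_I})^{e_I}$ (your formulas for $L_I$ and $e_I$ are correct), and the reduction to the two numerical identities $L_I=\widetilde L_{\overline I}$, $e_I=(-1)^n\widetilde e_{\overline I}$. However, the core duality step is left as a sketch precisely where the work lies. To dualize $\widehat m_I$ one first needs its group-theoretic form $\widehat m_I=\vert G+G_f^I+G_{f,0}\vert/\vert G+G_f^I\vert$, and then three facts: the identification $G_{\widetilde f}^{\overline I}=\widetilde{G_f^I}$ (the paper cites \cite[Lemma 1]{BLMS} for this), the exchange of sums and intersections under duality ($\widetilde{A+B}=\widetilde A\cap\widetilde B$), and Lemma~\ref{SL}, combined with $\vert H\vert\cdot\vert\widetilde H\vert=\vert G_f\vert$ to convert dual orders into indices. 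You name these ingredients but do not carry out the conversion; what you call ``the main obstacle'' is exactly the chain of identities the paper proves, and the Euler-characteristic/sign input $\ell_I s'_I=(-1)^n\widetilde\ell_{\overline I}\widetilde s'_{\overline I}$ is indeed imported from \cite{MMJ}, as you anticipate.

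There is also a genuine error in your endgame. Your scheme matches stratum $I$ with stratum $\overline I$ for \emph{all} $I$ and claims the normalization factors $\prod_{g}(1-t)_g$ are self-dual; both statements fail at the boundary. Since $\overline{I_0}=\emptyset$ and $V_{\widetilde f}^{\emptyset}=\emptyset$, the dual factor indexed by $\overline{I_0}$ is trivial, so your identity $e_{I_0}=(-1)^n\widetilde e_{\overline{I_0}}$ would force $e_{I_0}=0$; this is already false for $f=x^2$, $G=\{1\}$, $n=1$, where $\zeta^{{\rm orb},I_0}_{f,G}(t)=1-t^2$. In the same example the normalizations differ: $\prod_{g\in G}(1-t)_g=1-t$ while $\prod_{g\in\widetilde G}(1-t)_g=1-t^2$, so the normalization is not self-dual. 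The correct bookkeeping --- and the reason the theorem concerns the \emph{reduced} zeta function at all --- is a cross-matching: the full stratum $I_0$ of $(f,G)$ cancels against the dual pair's normalization $\prod_{g\in\widetilde G}(1-t)_g$ with exponent sign $(-1)^{n-1}$ (one has $m_{I_0}=\widetilde k$ via Lemma~\ref{SL}, and $m_{I_0}s_{I_0}=(-1)^{n-1}\widetilde k\widetilde r$ from \cite{MMJ}), and symmetrically the normalization of $(f,G)$ cancels against the dual's full stratum. Without this cross-matching your product over $I$ does not assemble into~(\ref{main-eq}).
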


\begin{proof} We use the notations from Section~\ref{sect-orbifold-zeta}. One has
\begin{equation}\label{product}
 \overline{\zeta}^{{\rm orb}}_{f,G}(t)=\prod\limits_{I\subset I_0} 
  \zeta^{{\rm orb}, I}_{f,G}(t) \left/\prod\limits_{[g]\in {\rm Conj\,}G}(1-t)_g\,. \right.
\end{equation}

 Let $\ZZ^n$ be the lattice of monomials in the variables $x_1$, \dots, $x_n$
($(k_1, \ldots, k_n)\in \ZZ^n$ corresponds to the monomial $x_1^{k_1}\cdots x_n^{k_n}$) 
and let $\ZZ^I:=\{(k_1, \ldots, k_n)\in \ZZ^n: k_i=0 \mbox{ for }i\notin I\}$. 
For a polynomial $F$ in the variables $x_1$, \dots, $x_n$, let $\mbox{supp\,} F\subset \ZZ^n$
be the set of monomials (with non-zero coefficients) in $F$.

The elements of the subgroup $G_{f,0}\cap G_f^I$ act on $V_f^I$ trivially. The monodromy
transformation defines a free action of the cyclic group $G_{f,0}/(G_{f,0}\cap G_f^I)$ on $V_f^I$.
Therefore the monodromy transformation on $V_f^I/G$ defines an action of the cyclic group
$G_{f,0}/\left(G_{f,0}\cap (G+G_f^I)\right)$ which is also free. According to the remark
at the beginning of Section~\ref{sect-orbifold-zeta}, the zeta function is given by
\begin{equation}\label{zeta-quotient}
 \zeta_{\widehat{h}_f^I}(t)=(1-t^{m_I})^{s_I}\,,
\end{equation}
where
$$
m_I=\vert G_{f,0}/\left(G_{f,0}\cap (G+G_f^I)\right)\vert=
\frac{\vert G+G_f^I+G_{f,0}\vert}{\vert G+G_f^I\vert}\,,
$$
$s_I=\chi(V_f^I/G)/m_I=\chi(V_f^I)/(m_I\vert G/G\cap G_f^I\vert)$.

Let $I$ be a proper subset of $I_0=\{1, \cdots, n\}$ (i.e.\ $I\ne \emptyset$, $I\ne I_0$),
and let $\overline{I}=I_0\setminus I$. If $({\rm supp\,}f)\cap Z^I$ consists of less than $\vert I\vert$
points, i.e.\ if $f$ has less than $\vert I\vert$ monomials in the variables $x_i$ with $i\in I$,
then $\chi(V_f^I)=0$ (e.g. due to the Varchenko formula \cite{Varch}) and therefore
$\zeta_{\widehat{h}_f^I}(t)=1$, $\zeta^{{\rm orb}, I}_{f,G}(t)=1$. In this case
$({\rm supp\,}\widetilde{f})\cap Z^{\overline{I}}$ consists of less than $\vert \overline{I}\vert$
points and therefore $\zeta^{{\rm orb}, I}_{\widetilde{f},\widetilde{G}}(t)=1$.

Let $\vert({\rm supp\,}f)\cap Z^I\vert=\vert I\vert$. From Equation~(\ref{zeta-quotient})
and Lemma~\ref{1-tm} it follows that
\begin{equation}
 \zeta^{{\rm orb}, I}_{f,G}(t)=\left(1-t^{{\rm lcm}(m_I,k_I)}\right)^{s'_I}\,,
\end{equation}
where
$$
k_I=\frac{\vert G\cap G_f^I\vert}{\vert G\cap G_f^I\cap {\rm SL}(n,\CC)\vert}\,.
$$ Therefore
\begin{equation}
 \zeta^{{\rm orb}, I}_{f,G}(t)=\left(1-t^{\ell_I}\right)^{s'_I}\,,
\end{equation}
where 
\begin{equation}\label{lcm}
 \ell_I={\rm lcm}\left(\frac{\vert G+ G_f^I+ G_{f,0}\vert}{\vert G+ G_f^I\vert},
 \frac{\vert G\cap G_f^I\vert}{\vert G\cap G_f^I\cap {\rm SL}(n,\CC)\vert}\right)\,.
\end{equation}

In this case $\vert({\rm supp\,}\widetilde{f})\cap Z^{\overline{I}}\vert=
\vert \overline{I}\vert$ and therefore
$$
 \zeta^{{\rm orb}, \overline{I}}_{\widetilde{f},\widetilde{G}}(t)=
 \left(1-t^{\widetilde{\ell}_I}\right)^{\widetilde{s}'_{\overline{I}}}\,,
$$
where 
\begin{equation}\label{lcm2}
 \widetilde{\ell_I}=
 {\rm lcm}
 \left(
 \frac{\vert \widetilde{G}+ G_{\widetilde{f}}^{\overline{I}}+ G_{\widetilde{f},0}\vert}
 {\vert \widetilde{G}+ G_{\widetilde{f}}^{\overline{I}}\vert},
 \frac{\vert \widetilde{G}\cap G_{\widetilde{f}}^{\overline{I}}\vert}
 {\vert \widetilde{G}\cap G_{\widetilde{f}}^{\overline{I}}\cap {\rm SL}(n,\CC)\vert}
 \right)\,.
\end{equation}
According to \cite[Lemma 1]{BLMS}, one has $G_{\widetilde{f}}^{\overline{I}}=\widetilde{G_f^I}$;
by Lemma~\ref{SL}, one has $\widetilde{G_{\widetilde{f},0}}= G_f \cap{\rm SL}(n,\CC)$
and $\widetilde{G_{f,0}}= G_{\widetilde{f}} \cap {\rm SL}(n,\CC)$.
This means that the subgroup $G+ G_f^I+ G_{f,0}\subset G_f$ is dual to 
$\widetilde{G}\cap G_{\widetilde{f}}^{\overline{I}}\cap {\rm SL}(n,\CC)\subset G_{\widetilde{f}}$
and the subgroup $G+ G_f^I\subset G_f$ is dual to
$\widetilde{G}\cap G_{\widetilde{f}}^{\overline{I}}\subset G_{\widetilde{f}}$.
Therefore
$$
\frac{\vert G+ G_f^I+ G_{f,0}\vert}{\vert G+ G_f^I\vert}=
\frac{\vert \widetilde{G}\cap G_{\widetilde{f}}^{\overline{I}}\vert}
{\vert \widetilde{G}\cap G_{\widetilde{f}}^{\overline{I}}\cap {\rm SL}(n,\CC)\vert}\,.
$$
In the same way
$$
\frac{\vert G\cap G_f^I\vert}{\vert G\cap G_f^I\cap {\rm SL}(n,\CC)\vert}=
\frac{\vert \widetilde{G}+ G_{\widetilde{f}}^{\overline{I}}+ G_{\widetilde{f},0}\vert}
 {\vert \widetilde{G}+ G_{\widetilde{f}}^{\overline{I}}\vert}
$$
and therefore $\ell_I=\widetilde{\ell}_{\overline{I}}$.
In \cite{MMJ} it was shown that
$\ell_I s'_I=(-1)^n\widetilde{\ell}_{\overline{I}}{\widetilde{s}_{\overline{I}}}'$.
Thus $s'_I=(-1)^n \widetilde{s}'_{\overline{I}}$. Therefore the factor
$\zeta^{{\rm orb}, I}_{f,G}(t)$ in Equation~(\ref{product}) for $\overline{\zeta}^{{\rm orb}}_{f,G}(t)$
is equal to the factor
$\left(\zeta^{{\rm orb}, \overline{I}}_{\widetilde{f},\widetilde{G}}(t)\right)^{(-1)^n}$
in the corresponding equation for
$\left(\overline{\zeta}^{{\rm orb}}_{\widetilde{f},\widetilde{G}}(t)\right)^{(-1)^n}$.

Now let $I=I_0$. One has $G_f^{I_0}=\{0\}$ and therefore
$\zeta^{{\rm orb}, I_0}_{f,G}(t)=\zeta_{\widehat{h}_f^{I_0}}(t)=\left(1-t^{m_{I_0}}\right)^{s_{I_0}}$,
where $m_{I_0}=\vert G_{f,0}/G\cap G_{f,0}\vert=\frac{\vert G+G_{f,0}\vert}{\vert G\vert}$. On the other hand,
by Lemma~\ref{1-tm}, one has 
$$
\prod\limits_{g\in\widetilde{G}}(1-t)_g=(1-t^{\widetilde{k}})^{\widetilde{r}},
$$
where
$$
\widetilde{k}=\frac{\vert\widetilde{G}\vert}{\vert \widetilde{G}\cap {\rm SL}(n,\CC)\vert}.
$$
Due to Lemma~\ref{SL}, the subgroup $\widetilde{G}\cap {\rm SL}(n,\CC)\subset G_{\widetilde{f}}$
is dual to the subgroup $G+G_{f,0}\subset G_f$.
Therefore $m_{I_0}=\widetilde{k}$. In~\cite{MMJ} it was shown that 
$m_{I_0}s_{I_0}=(-1)^{n-1}\widetilde{k}\widetilde{r}$. Therefore $s_{I_0}=(-1)^{n-1}\widetilde{r}$
and the factor
$\zeta^{{\rm orb}, I_0}_{f,G}(t)$ in Equation~(\ref{product}) for $\overline{\zeta}^{{\rm orb}}_{f,G}(t)$
is equal to the factor
$(\prod\limits_{g\in\widetilde{G}}(1-t)_g)^{(-1)^{n-1}}$
in the corresponding equation for
$(\overline{\zeta}^{{\rm orb}}_{\widetilde{f},\widetilde{G}}(t))^{(-1)^n}$.
\end{proof}

\begin{remark}
 In Equation~(\ref{lcm}) for the exponent $\ell_I$, the first argument of the least common multiple
 is connected with the monodromy action and the second one with the age shift.
 The duality interchanges this numbers. The one for the pair $(f,G)$ connected with the
 monodromy action is equal to the one for the dual pair $(\widetilde{f},\widetilde{G})$ connected with the
 age shift and vice versa (see~(\ref{lcm2})).
\end{remark}


\bigskip
\noindent Leibniz Universit\"{a}t Hannover, Institut f\"{u}r Algebraische Geometrie,\\
Postfach 6009, D-30060 Hannover, Germany \\
E-mail: ebeling@math.uni-hannover.de\\

\medskip
\noindent Moscow State University, Faculty of Mechanics and Mathematics,\\
Moscow, GSP-1, 119991, Russia\\
E-mail: sabir@mccme.ru

\end{document}